 \def \fh{{\mathfrak h}}
\newtheorem{Thm}{Theorem}[section]
\newtheorem{lemma}[Thm]{Lemma}
\newtheorem{definition}[Thm]{Definition}
\newtheorem*{theorem*}{Theorem}
\newtheorem{theorem}{Theorem}
\theoremstyle{definition}
\newtheorem{remark}[Thm]{Remark}
 \numberwithin{equation}{section}
\numberwithin{Thm}{section}
\numberwithin{equation}{section}
\def\({\left(}
\def\){\right)}
\def\[{\left[}
\def\]{\right]}
\def\<{\langle}
\def\>{\rangle}
\def\le{\leqslant}
\def\ge{\geqslant}
\def \bt{t}
\def \bx{\mathbf{x}}
\def\mand{\qquad \mbox{and} \qquad}
\numberwithin{equation}{section}
\numberwithin{table}{section}
\newcommand{\cA}{{\mathcal A}}
\DeclareMathOperator{\Prob}{Prob}
\renewcommand{\L}{{\mathbb L}}
\newcommand{\F}{{\mathbb F}}
\newcommand{\bfa}{{\mathbf a}}
\newcommand{\bfb}{{\mathbf b}}
\newcommand{\bfc}{{\mathbf c}}
\newcommand{\Lbar}{{\overline{\mathbb L}}}
\newcommand{\Fqbar}{\overline{\mathbb{F}_q}}
\newcommand{\Kbar}{\overline{K}}
\newcommand{\bG}{{\mathbb G}}
\newcommand{\bN}{{\mathbb N}}
\newcommand{\bC}{{\mathbb C}}
\newcommand{\bF}{{\mathbb F}}
\newcommand{\Fq}{\bF_q}
\newcommand{\lra}{\longrightarrow}
\newcommand{\cT}{\mathcal{T}}
\begin{document}

\title[Order of torsion]{Order of torsion for reduction of linearly independent points for a family of Drinfeld modules}

\author[D. Ghioca]{Dragos Ghioca}
\author[I. E. Shparlinski]{Igor E. Shparlinski}

\keywords{Drinfeld modules, reduction modulo primes, unlikely intersections}
\subjclass[2010]{Primary 37P05, Secondary 37P35}

\address{
Dragos Ghioca\\
Department of Mathematics\\
University of British Columbia\\
Vancouver, BC V6T 1Z2\\
Canada
}

\email{dghioca@math.ubc.ca}

\address{Igor E. Shparlinski\\ 
School of Mathematics and Statistics\\
University of New South Wales\\
Sydney NSW 2052\\
Australia
}

\email{igor.shparlinski@unsw.edu.au}

\begin{abstract}
Let $q$ be a power of the prime number $p$, let $K=\bF_q(t)$, and let $r\ge 2$ be an integer. For points $\bfa, \bfb\in K$ which are $\bF_q$-linearly independent, we show that there exist positive constants $N_0$ and $c_0$ such that for each integer $\ell\ge N_0$ and for each generator $\tau$ of $\F_{q^\ell}/\F_q$, we have that  for all except $N_0$ values $\lambda\in\Fqbar$, the corresponding specializations $\bfa(\tau)$ and $\bfb(\tau)$ cannot have orders of degrees less than $c_0\log\log\ell$  as torsion points for the Drinfeld module $\Phi^{(\tau,\lambda)}:\Fq[T]\lra {\rm End}_{\Fqbar}(\bG_a)$ (where $\bG_a$ is the additive group scheme), given by $\Phi^{(\tau,\lambda)}_T(x)=\tau x+\lambda x^q + x^{q^r}$. 
\end{abstract}

\maketitle

%%%%%%%%%%%%%%%%%%%%%%%%%%%%%%%%%%%%%%%%%%%%%%%%%%%%%%%%%%%%%%%%%%%%%%%%%%%%%%%%
%%%%%%%%%%%%%%%%%%%%%%%%%%%%%%%%%%%%%%%%%%%%%%%%%%%%%%%%%%%%%%%%%%%%%%%%%%%%%%%%

\section{Introduction}

%%%%%%%%%%%%%%%%%%%%%%%%%%%%%%%%%%%%%%%%%%%%%%%%%%%%%%%%%%%%%%%%%%%%%%%%%%%%%%%

\subsection{Notation}

Let $q$ a power of  a  prime number $p$. Consequently, we denote by $\Fq$ the finite field with $q$ elements and also, let $\Fqbar$ be its algebraic closure. We let $K=\Fq(t)$ be the rational function field of transcendence degre $1$ over $\Fq$. Then each $\bfa\in K$ can be written as a quotient ${P(t)}/{Q(t)}$ for relatively prime polynomials $P,Q\in\Fq[t]$ (with $Q\ne 0$); so, for all but finitely many $\tau \in\Fqbar$ (as long as $Q(\tau)\ne 0$), the specialization map $t\mapsto \tau$ yields a specialization $\bfa\mapsto \bfa( \tau)\in\Fqbar$. Furthermore, for any polynomial $f\in K[x]$, then for all but finitely many $ \tau\in\Fqbar$ the specialization $t\mapsto  \tau$ is well-defined for each coefficient of $f(x)$; in this case, we denote by $f(\tau ;x)\in\Fqbar[x]$ the corresponding specialization of $f(x)$.

An ($\Fq[T]$-)Drinfeld module $\Phi$ defined over any field $L$ containing $\Fq$ is given by a non-linear action of $\Fq[T]$ on the additive group scheme $\bG_a$, that is, it is an endomorphism $\Phi:\Fq[T]\lra {\rm End}_L(\bG_a)$ where not all of the maps $\Phi_{P(T)}:=\Phi(P(T))$ are linear. In particular, each such Drinfeld module is uniquely defined by an {\it additive polynomial\/} (of degree larger than $1$) with coefficients in $L$ corresponding to $\Phi_T$.  We recall thar  a polynomial $f(x)$ is called \emph{additive} if $f(x+y)=f(x)+f(y)$ for all $x$ and $y$. These are linear polynomials 
in characteristic zero and polynomials with monomial terms of degrees which are powers of $p$ in positive characteristic $p$.  
A point $\bfc\in\overline{L}$ is a torsion point for $\Phi$ if there exists a nonzero polynomial $P(T)\in\Fq[T]$ such that $\Phi_{P(T)}(\bfc)=0$; the  \emph{monic} polynomial $P(T)$ of minimal degree for which $\Phi_{P(T)}(\bfc)=0$ is called the \emph{order} of $\bfc$.  For more details on Drinfeld modules, we refer the reader to~\cite{Goss}.

%%%%%%%%%%%%%%%%%%%%%%%%%%%%%%%%%%%%%%%%%%%%%%%%%%%%%%%%%%%%%%%%%%%%%%%%%%%%%%%

\subsection{Unlikely intersections}

Inspired by the classical unlikely intersection problems in arithmetic geometry (for example, see~\cite{Habegger-0} and the references therein), Masser and Zannier~\cite{M-Z-1, M-Z-2} have proved a remarkable result: there exist finitely many $\lambda\in \bC$ such that both the points $\left(2,\sqrt{2(2-\lambda)}\right)$ and $\left(3,\sqrt{6(3-\lambda)}\right)$ are torsion on the corresponding elliptic curve $E_\lambda$ from the Legendre family  given by the equation $y^2=x(x-1)(x-\lambda)$. The results from~\cite{M-Z-1, M-Z-2} have been the starting point of a very intense research campaign in arithmetic dynamics, which emulates the classical unlikely intersection problems from arithmetic geometry, but this time, the unlikely intersection taking place in an algebraic dynamics setting (for example, see~\cite{B-D, GHT-ANT, GKN}). In this spirit, the first author and Hsia have proved in~\cite[Theorem~1.5]{GH-Acta} the following result.

\begin{theorem}
\label{thm:unlikely intersections}
Let $r>1$ be an integer, let $q$ be a power of the prime number $p$, and let $K=\Fq(t)$. We consider the family of Drinfeld modules $\Phi^{(z)}$ (parametrized by $z\in\Kbar$) such that 
$$\Phi_T^{(z)}(x)=tx + z x^q + x^{q^r}.$$ 
Let $\bfa,\bfb\in K$. Then there exist infinitely many $z\in \Kbar$ such that both $\bfa$ and $\bfb$ are torsion points for the Drinfeld module $\Phi^{(z)}$ if and only if $\bfa$ and $\bfb$ are $\Fq$-linearly dependent.
\end{theorem}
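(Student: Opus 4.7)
\emph{Easy direction.} Suppose $\bfa$ and $\bfb$ are $\Fq$-linearly dependent. We may assume both are nonzero and $\bfb=c\bfa$ for some $c\in\Fq^\times$. Every $\Fq[T]$-Drinfeld module is $\Fq$-linear, so $\Phi^{(z)}_{P(T)}(\bfb)=c\cdot\Phi^{(z)}_{P(T)}(\bfa)$, and $\bfb$ is $\Phi^{(z)}$-torsion precisely when $\bfa$ is. To produce infinitely many $z\in\Kbar$ making $\bfa$ torsion, observe that for each $n\ge 1$ the polynomial $\Phi^{(z)}_{T^n}(\bfa)\in K[z]$ is separable of degree $q^{r(n-1)}$ in $z$, and its zero locus strictly contains that of $\Phi^{(z)}_{T^{n-1}}(\bfa)$; the union of these zero loci is therefore infinite.

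\emph{Hard direction: overall strategy.} For the converse, suppose there are infinitely many $z_n\in\Kbar$ making both $\bfa$ and $\bfb$ torsion for $\Phi^{(z_n)}$, and assume toward a contradiction that $\bfa$ and $\bfb$ are $\Fq$-linearly independent. The plan is to run the Masser--Zannier equidistribution method in the Drinfeld setting: simultaneous torsion becomes simultaneous vanishing of canonical heights along $\{z_n\}$, which by equidistribution forces the coincidence of two a priori distinct limit measures on the parameter space $\bP^1_z$, and then a rigidity argument tailored to the family $\Phi^{(z)}_T(x)=tx+zx^q+x^{q^r}$ upgrades this coincidence to $\Fq$-proportionality of $\bfa$ and $\bfb$.

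\emph{Key steps.} (i) For each nonzero $\bfc\in K$, construct an adelic semipositively metrized line bundle $\overline{L}_\bfc$ on $\bP^1_z$ over $\Fq(t)$, whose local metrics at each place $v$ of $\Fq(t)$ encode the local canonical heights of $\bfc$ for the dynamics of $\Phi^{(z)}_T$, and whose global arithmetic height is $z\mapsto\hhat_{\Phi^{(z)}}(\bfc)$. (ii) Establish a Call--Silverman-type specialization estimate $\hhat_{\Phi^{(z)}}(\bfc)\asymp h(z)$ outside a bounded set of $z$ (this is the Drinfeld-module analogue of Tate's specialization theorem; compare the canonical-height variation tools used in~\cite{GH-Acta}), so that the torsion hypothesis forces $\{z_n\}$ to be a small, bounded-height sequence on $\bP^1_z$ simultaneously for $\overline{L}_\bfa$ and $\overline{L}_\bfb$. (iii) Apply the Chambert-Loir/Yuan equidistribution theorem, in its function-field, positive-characteristic form on Berkovich $\bP^{1,\mathrm{an}}$, to conclude that at every place $v$ of $\Fq(t)$ the Galois orbits of $\{z_n\}$ equidistribute both to the measure $\mu_{\bfa,v}$ attached to $\overline{L}_\bfa$ and to the measure $\mu_{\bfb,v}$ attached to $\overline{L}_\bfb$; hence $\mu_{\bfa,v}=\mu_{\bfb,v}$ for every $v$.

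\emph{Main obstacle: rigidity.} The crux is to deduce the algebraic relation $\bfb\in\Fq\cdot\bfa$ from the equality of dynamical measures $\mu_{\bfa,v}=\mu_{\bfb,v}$ at all places $v$. At a place of bad reduction for the family (for instance $v=\infty$ on $\Fq(t)$), the measure $\mu_{\bfc,v}$ is the pull-back under $z\mapsto\bfc$ of the equilibrium measure of the Berkovich Julia set of $\Phi^{(z)}_T$, so equality of measures becomes a global identity between local Green functions evaluated at $\bfa$ and $\bfb$. Exploiting the very rigid form $\Phi^{(z)}_T(x)=tx+zx^q+x^{q^r}$ -- in particular the fact that for $r\ge 2$ the generic endomorphism ring of the family is just $\Fq[T]$, so the only endomorphisms of $\bG_a$ intertwining $\Phi^{(z)}_T$ with itself arise from scalar multiplication by $\Fq^\times$ -- one converts this analytic identity into the algebraic identity $\bfb=c\bfa$ with $c\in\Fq^\times$, contradicting the assumed independence. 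This final rigidity step, where the specific additive-polynomial shape of $\Phi^{(z)}_T$ enters decisively, is the technical heart of the argument and the place where it most diverges from its elliptic-curve precursors.
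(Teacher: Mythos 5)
You should first be aware that the paper does not prove this statement at all: it is quoted verbatim from~\cite[Theorem~1.5]{GH-Acta}, and the only original content in the paper is the remark explaining how to pass from torsion parameters in $K^{\sep}$ (as in the cited theorem) to parameters in $\Kbar$, by working over the perfect closure $K^{{\rm per}}=\Fq\left(t^{1/p^n}\colon n\ge 1\right)$, which is a product formula field so that the Baker--Rumely equidistribution theorem~\cite[Theorem~7.52]{BR} still applies, and then using $\left(K^{{\rm per}}\right)^{\sep}=\Kbar$. So what you were really attempting is a re-proof of the Ghioca--Hsia theorem. Your overall strategy (simultaneous torsion $\Rightarrow$ vanishing canonical heights along the sequence of parameters $\Rightarrow$ equidistribution on the Berkovich parameter line at every place $\Rightarrow$ equality of the two limit measures attached to $\bfa$ and $\bfb$) does match the architecture of the actual proof in~\cite{GH-Acta}, and your easy direction is essentially fine (though separability of $\Phi^{(z)}_{T^n}(\bfa)$ in $z$ is neither needed nor justified; the strict growth of the zero set follows from the factorization $f_n=f_{n-1}\bigl(f_{n-1}^{q^r-1}+zf_{n-1}^{q-1}+t\bigr)$, whose second factor cannot vanish at a root of $f_{n-1}$ since it equals $t\ne 0$ there).

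The genuine gap is in what you yourself call the ``technical heart'': deducing $\bfb\in\Fq\cdot\bfa$ from the equality of measures $\mu_{\bfa,v}=\mu_{\bfb,v}$ (equivalently, of the local Green/height functions of $\bfa$ and $\bfb$ as functions of the parameter $z$). Your proposed mechanism --- that the generic endomorphism ring of the family is $\Fq[T]$, so the only maps intertwining $\Phi^{(z)}_T$ with itself are scalars in $\Fq^\times$ --- is asserted, not proved, and it is not even the right shape of argument: equality of equilibrium measures on the parameter line is not prima facie an intertwining statement about the additive polynomial $\Phi^{(z)}_T$, and no reduction from one to the other is offered. In~\cite{GH-Acta} this implication is obtained by a delicate valuation-theoretic comparison of the $z$-dependence of the local canonical heights of $\bfa$ and $\bfb$ at the places of $\Fq(t)$; indeed the present paper emphasizes in its final section that precisely these ``nontrivial arguments using valuation theory'' are the obstruction to generalizing the theorem. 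In addition, your step (iii) applies equidistribution to Galois orbits of parameters $z_n\in\Kbar$ without addressing inseparability over $K$, which is exactly the point the paper's remark has to handle (via the product formula field $K^{{\rm per}}$); as written, your appeal to a ``positive-characteristic Chambert-Loir/Yuan theorem'' glosses over the hypothesis that makes it applicable. So the proposal is a reasonable road map, but the decisive rigidity step and the inseparability issue are left unproved.
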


\begin{remark}
%\label{rem:important}
In~\cite[Theorem~1.5]{GH-Acta}, the above conclusion in Theorem~\ref{thm:unlikely intersections} has been stated under the assumption that there exist infinitely many $\lambda$ in the separable closure $K^{{\rm sep}}$ of $K$. On the other hand, we observe that the main technical ingredient used in the proof of \cite[Theorem~1.5]{GH-Acta} comes from the equidistribution theorem of Baker and Rumely~\cite[Theorem~7.52]{BR}, which is stated for product formula fields. So, in~\cite[Theorem~2.6]{GH-Acta} (which provides the key step in proving \cite[Theorem~1.5]{GH-Acta}), one could work instead with the perfect closure $K^{{\rm per}}:=\Fq\left(t^{1/p^n}\colon n\ge 1\right)$, which is a product formula field since each valuation on $K$ has a unique extension to $K^{{\rm per}}$ and thus, the hypotheses (1)-(4) from \cite[Definition~7.51,~p.~185]{BR} are met for $K^{{\rm per}}$. Then since $\left(K^{{\rm per}}\right)^{{\rm sep}}=\Kbar$, one obtains our Theorem~\ref{thm:unlikely intersections} with an identical proof as in \cite[Theorem~1.5]{GH-Acta}. 
\end{remark}

\begin{remark}
\label{rem:strict}
We also note that if $\bfb=\zeta\cdot \bfa$ for some $\zeta\in\Fq$, then $\Phi^{(z)}_{P(T)}(\bfb)=\zeta\cdot \Phi^{(z)}_{P(T)}(\bfa)$ for \emph{each} $z\in \Kbar$; therefore, we have that $\bfa$ is torsion for $\Phi^{(z)}$ if and only if $\bfb$ is torsion for $\Phi^{(z)}$, and moreover, they both have the same order as torsion points for $\Phi^{(z)}$. 
\end{remark}

One of the unlikely intersection results inspired by~\cite{M-Z-1, M-Z-2} is the result of Habegger~\cite{Habegger} that there exist finitely many pairs $(a,b)\in\bC^2$ with the property that \emph{each} of the three points 
\begin{equation}
\label{eq:points elliptic}
\left(1,\sqrt{1+a+b}\right), \quad \left(2,\sqrt{8+2a+b}\right), \quad \left(3,\sqrt{27+3a+b}\right)
\end{equation}
are torsion points for the elliptic curve $y^2=x^3+ax+b$. This motivated the second author to prove the following result (see~\cite[Theorem~1.1]{Igor}). 
\begin{theorem}
\label{thm:elliptic}
There is an absolute constant $M_0$ such that for all primes $p$ and
for all but at most $M_0$ pairs $(a, b)\in \overline{\F_p}^2$ with $4a^3 + 27b^2\ne 0$ at least one of the
points from~\eqref{eq:points elliptic} is of order at least $(\log  p)^{1/33+o(1)}$.
\end{theorem}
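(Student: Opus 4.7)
The plan is to encode each torsion condition as a polynomial equation in $(a, b)$, apply a Bezout-type intersection bound, and eliminate common components by an unlikely-intersection input in the spirit of Habegger~\cite{Habegger}. Concretely, for each integer $n \ge 1$, let $\psi_n(X; a, b)$ be the $n$-th division polynomial of $E_{a,b} : y^2 = x^3 + ax + b$; a point with $x$-coordinate $X_0$ is annihilated by $[n]$ iff $\psi_n(X_0; a, b) = 0$, and the specialization $\psi_n(i; a, b)$ has total degree $O(n^2)$ in $(a, b)$. If each of the three points in~\eqref{eq:points elliptic} has order at most $N$, then $(a, b)$ is a common zero of some triple $\psi_{n_1}(1; \cdot), \psi_{n_2}(2; \cdot), \psi_{n_3}(3; \cdot)$ with $n_i \le N$. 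Assuming no two of these curves share an irreducible component, Bezout bounds their common zero set by $O(n_1^2 n_2^2 n_3^2)$, and summing over the $O(N^3)$ choices of $(n_1, n_2, n_3)$ gives a total of $O(N^9)$ candidate pairs $(a, b)$.

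To turn this into an absolute constant $M_0$ independent of $p$, I would invoke a positive-characteristic refinement of Habegger's three-point unlikely-intersection theorem: outside a $p$-uniform bounded exceptional set, no two of the curves $\psi_{n_i}(i; \cdot) = 0$ can accidentally share a common component. In characteristic zero this is~\cite{Habegger}; in positive characteristic it should follow from the product-formula Berkovich equidistribution argument, carried out over the perfect closure of $\overline{\F_p}(a, b)$ in the spirit of the remark after Theorem~\ref{thm:unlikely intersections}. The exponent $1/33$ then arises as a specific optimization, balancing the Bezout count $O(N^9)$ against the arithmetic degree growth of division polynomials in characteristic $p$ and the $p$-dependent size of torsion fibres of the parameter map.

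The main obstacle is quantitative: producing a $p$-uniform unlikely-intersection statement with sufficiently explicit equidistribution error terms to extract a logarithmic-in-$p$ threshold. A purely qualitative statement would only yield that $N$ grows at most polylogarithmically in $p$; reaching $(\log p)^{1/33 + o(1)}$ requires carefully tracking the characteristic-$p$ degree contributions throughout the equidistribution step and optimizing the resulting trade-off among the three torsion orders.
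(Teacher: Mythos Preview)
Your proposal has a genuine gap: you are trying to establish a \emph{positive-characteristic} analogue of Habegger's three-point theorem, and you identify this as the main obstacle. But the proof in~\cite{Igor} (as the paper indicates right after stating the theorem) never needs such a result. The key idea you are missing is that one works entirely with Habegger's theorem in characteristic \emph{zero}, and then transfers the conclusion to $\overline{\F_p}$ via a specialization lemma for systems of polynomial equations with integer coefficients, namely~\cite[Theorem~2.1]{TAMS}. This is exactly the role played in the present paper by Lemma~\ref{lem:reduction} in the Drinfeld-module setting.

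Concretely: the torsion conditions on the three points~\eqref{eq:points elliptic} with orders dividing $n_1,n_2,n_3$ are encoded by polynomials $\psi_{n_i}(i;a,b)\in\Z[a,b]$. By Habegger's theorem over $\bC$, for \emph{every} choice of $(n_1,n_2,n_3)$ the product polynomials $\prod_{n\le N}\psi_n(i;a,b)$, $i=1,2,3$, have at most $M_0$ common zeros in $\bC^2$, with $M_0$ independent of $N$. The result~\cite[Theorem~2.1]{TAMS} then bounds, in terms of the degrees and heights of these polynomials, the set of primes $p$ for which reduction modulo $p$ can produce more than $M_0$ common zeros in $\overline{\F_p}^2$. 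Since the division polynomials $\psi_n$ have degree $O(n^2)$ and height $O(n^2)$, the product over $n\le N$ has degree and height polynomial in $N$, and plugging these into the quantitative bound from~\cite{TAMS} yields that for $N$ roughly $(\log p)^{1/33}$ the prime $p$ is not exceptional. No equidistribution in characteristic $p$, no Berkovich spaces over $\overline{\F_p}(a,b)$, and no $p$-uniform error terms are needed; the exponent $1/33$ arises purely from the polynomial growth of degrees and heights fed into~\cite[Theorem~2.1]{TAMS}.
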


Theorem~\ref{thm:elliptic} is proven in~\cite{Igor} by employing both the result of Habegger~\cite{Habegger}, but also using in an essential way~\cite[Theorem~2.1]{TAMS} which provides a way to control the number of solutions to polynomial equations when they are reduced modulo primes. 

\subsection{Our results}

Here  we prove the following result which is a counterpart for Drinfeld modules of Theorem~\ref{thm:elliptic} (also, note that as observed in~\cite[Chapter~3]{Goss}, the family of Drinfeld modules $\Phi^{(z)}$ which plays the counterpart role of elliptic curves is the one for which $\Phi^{(z)}_T(x)=tx+z x^q + x^{q^2}$, as we vary $z$ in $\Kbar$).
\begin{Thm}
\label{thm:main}
Let $r\ge 2$ be an integer, let $q$ be a power of the prime $p$, and let $K=\Fq(t)$. We let $\bfa,\bfb\in K$ be two $\Fq$-linearly independent points. Then there exist positive constants $N_0$ and $c_0$ (depending only on $q$, $r$ and the degrees of $\bfa$ and $\bfb$) such that for each $\ell\ge N_0$ and for each generator $\tau$ of the extension $\F_{q^\ell}/\F_q$, we have that for all except $N_0$ values of $\lambda\in\Fqbar$, at least one of the two points $\bfa(\tau)$ or $\bfb(\tau)$ is a torsion point whose order has degree at least $c_0\log\log(\ell)$ for the Drinfeld module $\Phi^{(\tau,\lambda)}\in {\rm End}_{\Fqbar}(\bG_a)$ given by $\Phi^{(\tau,\lambda)}_T(x):=\tau x+\lambda x^q + x^{q^r}$.
\end{Thm}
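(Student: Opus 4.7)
The strategy is to bound the set of ``bad'' $\lambda\in\Fqbar$ (those where both $\bfa(\tau)$ and $\bfb(\tau)$ have order of degree less than $D:=c_0\log\log\ell$ under $\Phi^{(\tau,\lambda)}$) by an absolute constant, with Theorem~\ref{thm:unlikely intersections} as the main input. For each bad $\lambda$, pick monic $P_1,P_2\in\Fq[T]$ with $\deg P_i\le D$ such that $\Phi^{(\tau,\lambda)}_{P_1(T)}(\bfa(\tau))=\Phi^{(\tau,\lambda)}_{P_2(T)}(\bfb(\tau))=0$; lifting back to the generic fibre and clearing the denominators of $\bfa,\bfb$, one obtains bivariate polynomials $F_1(t,\lambda),F_2(t,\lambda)\in\Fq[t,\lambda]$ that both vanish at $(\tau,\lambda)$, and one forms the resultant $R_{P_1,P_2}(t):=\mathrm{Res}_\lambda(F_1,F_2)\in\Fq[t]$.

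Iterating the recursion $\Phi^{(t,\lambda)}_{T^n}=\Phi^{(t,\lambda)}_T\circ\Phi^{(t,\lambda)}_{T^{n-1}}$ yields explicit bounds of the form $\deg_\lambda F_i,\deg_t F_i\le Cq^{r\deg P_i}$, where $C$ depends only on $\bfa,\bfb,q,r$; the Sylvester formula then gives $\deg_t R_{P_1,P_2}\le C'q^{2rD}$, which is only polylogarithmic in $\ell$ once $D=c_0\log\log\ell$. Choosing $c_0$ small and requiring $\ell\ge N_0$ keeps this degree strictly below $\ell$, the degree over $\Fq$ of the minimal polynomial of the generator $\tau$. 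Hence any nonzero $R_{P_1,P_2}$ satisfies $R_{P_1,P_2}(\tau)\ne0$, so $F_1(\tau,\lambda)$ and $F_2(\tau,\lambda)$ share no common root in $\lambda$, and the pair $(P_1,P_2)$ contributes no bad $\lambda$.

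The remaining pairs satisfy $R_{P_1,P_2}(t)\equiv0$, meaning $F_1$ and $F_2$ share an irreducible common factor $H(t,\lambda)\in\Fq[t,\lambda]$. Each such $H$ is (after clearing $t$-denominators) the minimal polynomial over $K$ of some $\lambda_*\in\Kbar$ on whose graph both $\bfa$ and $\bfb$ are torsion for the Drinfeld module $\Phi^{(\lambda_*)}$ of Theorem~\ref{thm:unlikely intersections} (with the parameter $z$ identified as $\lambda$). By that theorem and the $\Fq$-linear independence of $\bfa$ and $\bfb$, the set $S$ of such $\lambda_*$ is finite; consequently the bad $\lambda\in\Fqbar$ surviving to this case are among the specialisations of $\lambda_*\in S$ at $t=\tau$, numbering at most $|S|$, an absolute constant depending only on $q,r,\deg\bfa,\deg\bfb$. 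This yields the required bound $N_0$.

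The main technical obstacle is the uniform bookkeeping of the degrees of the compositions $\Phi^{(t,\lambda)}_P$ as $P$ ranges over all monic polynomials of degree $\le D$, combined with the identification---via Theorem~\ref{thm:unlikely intersections}---of common irreducible factors of $F_1,F_2$ in $\Fq[t,\lambda]$ with the finite set of simultaneous-torsion parameters. A secondary point is verifying that $\bfa(\tau),\bfb(\tau)\ne0$ for $\ell\ge N_0$, which is automatic since $\tau$ has degree $\ell\ge N_0$ over $\Fq$ and cannot be a root of the low-degree numerators of $\bfa,\bfb$; otherwise $F_i(\tau,\lambda)$ would lose positive $\lambda$-degree and the resultant argument would degenerate.
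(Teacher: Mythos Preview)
Your overall strategy is close in spirit to the paper's: both invoke Theorem~\ref{thm:unlikely intersections} to bound the simultaneous-torsion parameters in $\Kbar$ by some $N_0$, and then argue that this bound survives specialization $t\mapsto\tau$ provided $\tau$ has large enough degree. The paper packages all monic $P$ of degree $\le M$ into two single products $\tilde g_{\bfa,M}(z),\tilde g_{\bfb,M}(z)\in\Fq[t][z]$ and applies one B\'ezout-type step (Lemma~\ref{lem:reduction}), whereas you work pair-by-pair with resultants. A genuine difference is your height estimate $\deg_t F_i\le Cq^{r\deg P_i}$: this is sharper than the paper's bound $(2+\deg\bfa)^{q^{r\deg P_i}}$ in Lemma~\ref{lem:one}, and if exploited fully your resultant degrees are only single-exponential in $D$, which would in fact yield $c_0\log\ell$ (for all generators $\tau$) rather than the $c_0\log\log\ell$ the paper obtains from its looser height bound.

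There is, however, a real gap in your handling of the pairs with $R_{P_1,P_2}(t)\equiv 0$. You correctly deduce that $F_1,F_2$ share an irreducible factor $H\in\Fq[t,\lambda]$ whose roots over $\Kbar$ lie in the finite set $S$, but you then assert that the bad $\lambda\in\Fqbar$ for this pair are all among the roots of $H(\tau,\cdot)$. That does not follow: writing $F_i=G\,\tilde F_i$ with $G=\gcd_\lambda(F_1,F_2)$ primitive in $\Fq[t,\lambda]$, the common zeros of $F_1(\tau,\cdot)$ and $F_2(\tau,\cdot)$ are the zeros of $G(\tau,\cdot)$ together with any \emph{accidental} common zeros of $\tilde F_1(\tau,\cdot),\tilde F_2(\tau,\cdot)$, and the vanishing of $R_{P_1,P_2}$ says nothing about the latter. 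The fix is to apply your degree argument not to $\mathrm{Res}_\lambda(F_1,F_2)$ but to $\mathrm{Res}_\lambda(\tilde F_1,\tilde F_2)$, which is \emph{nonzero} (since $\tilde F_1,\tilde F_2$ are coprime over $\Fq(t)$) and satisfies the same degree bound $C'q^{2rD}$ (by Gauss's lemma $\tilde F_i\in\Fq[t,\lambda]$ with $\fh(\tilde F_i)\le\fh(F_i)$). Then $\tau$ is not a root, the coprime parts share no zero at $t=\tau$, and every bad $\lambda$ for this pair is indeed a root of $G(\tau,\cdot)$, hence among the at most $|S|\le N_0$ specializations of $S$. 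This is precisely the content of the paper's Lemma~\ref{lem:reduction}, which carries out the factoring-plus-B\'ezout step once and for all.
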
 

\begin{remark}
%\label{rem:one}
In Theorem~\ref{thm:main}, the family of Drinfeld modules $\Phi^{(\tau,\lambda)}$ given by $\Phi^{(\tau,\lambda)}_T(x):=\tau x+\lambda x^q + x^{q^r}$ varies with respect to both parameters 
$\tau$ and $\lambda$ in $\Fqbar$. Note that for any given $\tau\in \Fqbar$, one could easily find $\lambda\in\Fqbar$ such that $\bfa(\tau)$ is killed by $\Phi_T$, say, and therefore $\bfa(\tau)$ could be a torsion point of order of a  very small degree for the Drinfeld module $\Phi^{(\tau,\lambda)}$; but then the conclusion of our Theorem~\ref{thm:main} yields that $\bfb(\tau)$ cannot be a torsion point whose order has also very small degee compared to $[\Fq(\tau):\Fq]$. Again, just like it has been observed in Remark~\ref{rem:strict}, the fact that $\bfa$ and $\bfb$ are $\Fq$-linearly independent is paramount since otherwise, if $\bfb/\bfa\in \Fq^*$ (for example)  then for any suitable  specialization $t\mapsto \tau$, we would have that also $\bfa(\tau)$ and $\bfb(\tau)$ are $\Fq$-linearly dependent and therefore, they are torsion points for any Drinfeld module $\Phi^{(\tau,\lambda)}$ of the same order (which thus could be very small relative to $[\Fq(\tau):\Fq]$).  
\end{remark}

\begin{remark}
%\label{rem:two}
The fact that in the conclusion of our Theorem~\ref{thm:main} we ask that  $\ell\ge N_0$ is merely for guaranteeing that the specialization $\bfa\mapsto \bfa(\tau)$ (resp. $\bfb\mapsto \bfb(\tau)$) is well-defined for each generator $\tau$ for the extension $\F_{q^\ell}/\F_q$ (note that there are only finitely many $\tau\in\Fqbar$ for which the above specializations do not make sense).
\end{remark}

\begin{remark}
%\label{rem:four}
The extra logarithm appearing in $c_0\log\log\ell$ from the conclusion of our Theorem~\ref{thm:main} comes form the fact that we insist that for \emph{each}  specialization $t\mapsto \tau$ where $\tau$ \emph{generates} the extension $\F_{q^\ell}/\Fq$ our desired conclusion regarding the orders of the torsion points $\bfa(\tau)$ and $\bfb(\tau)$ must hold. Our proof could be easily modified to show that there exists \emph{some} $\tau$ generating the extension $\F_{q^\ell}/\Fq$ with the property that the  specialization $t\mapsto \tau$ yields that at least one of $\bfa(\tau)$ or $\bfb(\tau)$ would be a torsion point for $\Phi^{(\tau,\lambda)}$ (for each $\lambda\in\Fqbar$ apart from some $N_0$ values $\lambda$) whose order has degree at least $c_1\log(\ell)$ (for some positive constant $c_1$ depending only on $q$, $r$ and the degrees of $\bfa$ and $\bfb$).   
\end{remark}

We also note that our Theorem~\ref{thm:main} (similar to  Theorem~\ref{thm:elliptic} of~\cite{Igor}) fits into a more general program of obtaining results for specialization over finite fields for certain global results stemming from arithmetic dynamics (see also~\cite{A-G, BCMOS,Chang-1,   CKSZ, TAMS, KMS, Mello, Shp18}). 
Also, strictly in the context of Drinfeld modules, there has been interest in the past for considering Drinfeld modules defined over finite fields, either for studying some of their intrinsic arithmetic properties (as in~\cite{G-TAMS-0}), or for infering aritmetic properties of Drinfeld modules of generic characteristic after considering their reductions at various places (see~\cite{G-T, Hsia}). Furthermore, our results provide additional evidence that Drinfeld modules are the right vehicle in positive characteristic for formulating questions which are analogue to classical arithmetic problems in the context of abelian varieties (see \cite{Florian, G-IMRN, Scanlon} for results in the context of Drinfeld modules, which are analogues of the classical Andr\'e-Oort, Mordell-Lang, respectvely Manin-Mumford conjectures). 

We also note that~\cite[Theorem~5.2]{A-G} provides a somewhat related result to our Theorem~\ref{thm:main}; however, there are some significant differences between that result and ours. The result from~\cite[Theorem~5.2]{A-G} refers to reductions at various places of a \emph{given} Drinfeld module, while in our result we vary both the specialization $t\mapsto \tau$ but also vary $\lambda$ in $\Fqbar$; in other words, $\left\{\Phi^{(\tau,\lambda)}\right\}$ is a two-dimensional family of Drinfeld modules defined over $\Fqbar$. Finally, we refer to our Section~\ref{sec:comments} for a discussion regarding further research in the direction set by our paper.

We sketch briefly the plan for our paper. The approach to our Theorem~\ref{thm:main} follows the general direction set in~\cite{Igor} for the case of elliptic curves. So, we need to establish a technical result (which is our Lemma~\ref{lem:reduction}) regarding the preservation of the number  of zeros in specializations of  polynomials over a function field. Then using also a couple of technical results (see Lemmas~\ref{lem:one} and~\ref{lem:two}) regarding the growth of heights and of degrees of the iterates of a given point under polynomials in a family of Drinfeld modules, we obtain the desired technical ingredients for finishing the proof of our Theorem~\ref{thm:main}, which is completed  Section~\ref{sec:proofs}. We conclude with a discussion in Section~\ref{sec:comments} of further research in the direction opened up by our article.

%%%%%%%%%%%%%%%%%%%%%%%%%%%%%%%%%%%%%%%%%%%%%%%%%%%%%%%%%%%%%%%%%%%%%%%%%%%%%%%
%%%%%%%%%%%%%%%%%%%%%%%%%%%%%%%%%%%%%%%%%%%%%%%%%%%%%%%%%%%%%%%%%%%%%%%%%%%%%%%

\section{Preliminary results}

\subsection{Greatest common divisors of polynomials}

We recall that a greatest common divisor of polynomials  $f_1, \ldots, f_s \in \L[x]$ over a field $\L$ 
is any polynomial $d\in\L[x]$ such that any other common divisor $e$ of $f_1, \ldots, f_s$ divides $d$
(clearly $d$ is defined up to a nonzero scalar multiple). 
In particular,  we can define the degree $\deg \gcd(f_1, \ldots, f_s)$,  which does not 
depend on a particular choice of a greatest common divisor. 

We need the following special case of~\cite[Lemma~4.8]{vzGKS}. 

\begin{lemma}
\label{lem:many2two}
Let $\L$ be a field and let 
$f_1, \ldots, f_s \in \L[x]$ be $s \ge 2$ polynomials of degree
at most $D$. For any finite set  
$\cA \subseteq \L$, 
uniformly chosen random elements $\alpha_3, \ldots, \alpha_s \in \cA$ 
and 
$$
f_0= f_2 + \sum_{3 \le i \le s} \alpha_i f_i \in \L[x].
$$
we have 
$$\Prob_{\alpha_3, \ldots, \alpha_s \in \cA}\left[ \deg \gcd(f_1, \ldots, f_s) =\deg  \gcd(f_0, f_1) \right] \geq 1-  D/\#\cA.$$
\end{lemma}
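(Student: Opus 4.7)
\smallskip

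\noindent\textbf{Proof plan for Lemma~\ref{lem:many2two}.}
The plan is to factor out the common divisor and then run a Schwartz--Zippel-style root-counting argument on the coprime quotients. First I would let $d\in\L[x]$ be a greatest common divisor of $f_1,\ldots,f_s$ and write $f_i=d\,g_i$ for $i=1,\ldots,s$, so that $\gcd(g_1,\ldots,g_s)=1$. Observe that $d$ divides $f_0=f_2+\sum_{i=3}^s \alpha_i f_i$ for every choice of the $\alpha_i$, and correspondingly $f_0=d\,g_0$ with
$$
g_0 \;=\; g_2+\sum_{i=3}^{s}\alpha_i g_i.
$$
Since $\gcd(f_0,f_1)=d\cdot\gcd(g_0,g_1)$, the claim reduces to proving that, with probability at least $1-D/\#\cA$, one has $\gcd(g_0,g_1)=1$, that is, $g_0$ and $g_1$ share no common root in an algebraic closure $\overline{\L}$.

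The next step is to fix a root $\beta\in\overline{\L}$ of $g_1$ and bound the probability that $g_0(\beta)=0$. Because $\gcd(g_1,\ldots,g_s)=1$, the polynomials $g_1,\ldots,g_s$ have no common zero, so there exists some $j\in\{2,\ldots,s\}$ with $g_j(\beta)\ne 0$. If every $g_i(\beta)=0$ for $i\ge 3$, then $g_2(\beta)\ne 0$ forces $g_0(\beta)=g_2(\beta)\ne 0$ deterministically; otherwise, picking an index $j_0\ge 3$ with $g_{j_0}(\beta)\ne 0$ and conditioning on the remaining $\alpha_i$'s, the equation $g_0(\beta)=0$ is a nontrivial affine equation in $\alpha_{j_0}$ with a unique solution in $\L$, so it holds for at most one value of $\alpha_{j_0}\in\cA$. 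In either case,
$$
\Prob_{\alpha_3,\ldots,\alpha_s\in\cA}\bigl[g_0(\beta)=0\bigr]\le \frac{1}{\#\cA}.
$$

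Finally, I would take a union bound over the distinct roots of $g_1$ in $\overline{\L}$, of which there are at most $\deg g_1\le\deg f_1\le D$. This yields
$$
\Prob\bigl[\gcd(g_0,g_1)\ne 1\bigr]\le \frac{D}{\#\cA},
$$
which, after multiplying both sides of the identity $\gcd(f_0,f_1)=d\cdot\gcd(g_0,g_1)$ by $d$ and taking degrees, gives the desired conclusion $\deg\gcd(f_0,f_1)=\deg\gcd(f_1,\ldots,f_s)$ with probability at least $1-D/\#\cA$. The only mild obstacle is the bookkeeping in the conditional-probability step: one must check that for \emph{every} root $\beta$ of $g_1$ there is genuinely at least one index $j_0\ge 3$ (or the deterministic fallback using $g_2$) that makes the linear functional $\alpha\mapsto g_0(\beta)$ nonconstant, and this is exactly what coprimality of $g_1,\ldots,g_s$ supplies.
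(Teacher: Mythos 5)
Your proposal is correct, but there is nothing internal to compare it with: the paper does not prove this lemma, it simply imports it as a special case of Lemma~4.8 of~\cite{vzGKS}. Your self-contained argument is the natural root-counting proof that underlies such statements: factor out a greatest common divisor $d$, pass to the coprime quotients $g_i=f_i/d$, note $\gcd(f_0,f_1)=d\cdot\gcd(g_0,g_1)$ so the claim becomes $\gcd(g_0,g_1)=1$, and then for each fixed root $\beta\in\Lbar$ of $g_1$ use coprimality of $g_1,\ldots,g_s$ to see that either $g_i(\beta)=0$ for all $i\ge 3$ and $g_2(\beta)\ne 0$ (so $g_0(\beta)\ne 0$ deterministically) or some $g_{j_0}(\beta)\ne 0$ with $j_0\ge 3$ makes $g_0(\beta)=0$ an affine condition satisfied by at most one value of $\alpha_{j_0}\in\cA$ (the unique solution lives in $\Lbar$, not necessarily in $\L$, but that only helps); the union bound over the at most $\deg g_1\le D$ distinct roots gives exactly the bound $D/\#\cA$, and in fact the slightly stronger $(\deg f_1-\deg d)/\#\cA$. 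All steps check out, including the tacit use of the fact that $\gcd$ over $\L$ equals $\gcd$ over $\Lbar$. The one caveat is degenerate and attaches to the statement rather than to your proof: your count over the roots of $g_1$ (and the lemma itself, as written) presupposes $f_1\ne 0$; if $f_1=0$ the conclusion can fail for every choice of the $\alpha_i$ (take $s=3$, $f_1=0$, $f_2=x$, $f_3=1$, so $\deg\gcd(f_1,f_2,f_3)=0$ while $\deg\gcd(f_0,f_1)=\deg f_0=1$). This missing nondegeneracy hypothesis is presumably supplied in the original formulation in~\cite{vzGKS} and is harmless in the paper's use of the lemma inside Lemma~\ref{lem:Bezout}, but it is worth recording if you state your proof in full.
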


We can now estimate the degree of the coefficients in the B{\'e}zout identity for 
relatively prime polynomials. 

\begin{lemma}
\label{lem:Bezout}
Let $\L$ be a field and let 
$f_1, \ldots, f_s \in \L[x]$ be $s \ge 2$  relatively prime polynomials of degree
at most $D$. There are polynomials $b_1, \ldots, b_s \in \L[x]$ of degree
at most $D$ such that  
\begin{equation}
\label{eq:Bezout}
b_1f_1 + \ldots + b_s f_s = 1.
\end{equation}
\end{lemma}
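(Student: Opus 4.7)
The plan is to reduce the general $s \ge 2$ case to $s=2$ by means of Lemma~\ref{lem:many2two}, invoke the extended Euclidean algorithm for the $s=2$ case, and then handle small fields by extension of scalars combined with linear-algebraic descent.

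First I would dispose of the case $s=2$: given relatively prime $f_1,f_2 \in \L[x]$ with $\deg f_i \le D$, the standard extended Euclidean algorithm produces $b_1,b_2 \in \L[x]$ with $b_1 f_1 + b_2 f_2 = 1$ and $\deg b_1 < \deg f_2 \le D$, $\deg b_2 < \deg f_1 \le D$ (with the trivial adjustment $b_i = f_i^{-1}$, $b_{3-i}=0$ if some $f_i$ is a nonzero constant).

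Next, for $s \ge 3$, suppose first that $\#\L > D$. Applying Lemma~\ref{lem:many2two} with $\cA = \L$, the success probability $1 - D/\#\L$ is positive, so there exist $\alpha_3,\ldots,\alpha_s \in \L$ such that
\[
f_0 := f_2 + \alpha_3 f_3 + \ldots + \alpha_s f_s
\]
satisfies $\deg \gcd(f_0,f_1) = \deg \gcd(f_1,\ldots,f_s) = 0$, i.e.\ $f_0$ and $f_1$ are relatively prime. Since $\deg f_0 \le D$, the $s=2$ case yields $u,v \in \L[x]$ of degree at most $D$ with $u f_1 + v f_0 = 1$. Substituting gives
\[
u f_1 + v f_2 + \sum_{i=3}^{s} (v\alpha_i)\, f_i = 1,
\]
and because each $\alpha_i \in \L$ is a scalar, setting $b_1 = u$, $b_2 = v$, and $b_i = v\alpha_i$ for $i \ge 3$ produces the required identity~\eqref{eq:Bezout} with all $\deg b_i \le D$.

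Finally, to remove the hypothesis $\#\L > D$, I would pass to any field extension $\L \subseteq \L'$ with $\#\L' > D$. The polynomials $f_1,\ldots,f_s$ remain relatively prime over $\L'$ (their gcd is computed by the Euclidean algorithm and hence is preserved under field extension), so the previous step produces $b'_1,\ldots,b'_s \in \L'[x]$ of degree at most $D$ with $\sum_i b'_i f_i = 1$. But the existence of a tuple $(b_1,\ldots,b_s) \in (\L[x]_{\le D})^s$ with $\sum_i b_i f_i = 1$ is the solvability of a linear system whose coefficients and right-hand side already lie in $\L$; such a system is solvable over $\L$ if and only if it is solvable over any extension of $\L$, and this descent completes the proof. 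The only step that requires care is precisely this descent, since Lemma~\ref{lem:many2two} does not by itself furnish the needed $\alpha_i$ inside a small finite $\L$; avoiding a direct construction by appealing to linear algebra circumvents that obstacle cleanly.
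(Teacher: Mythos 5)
Your proposal is correct and follows essentially the same route as the paper: use Lemma~\ref{lem:many2two} over a sufficiently large extension of $\L$ to reduce to two coprime polynomials, apply the extended Euclidean algorithm there, and then descend to $\L$ by observing that \eqref{eq:Bezout} with the degree bound is a linear system defined over $\L$, solvable over an extension and hence over $\L$ itself (the paper simply works over $\Lbar$ instead of a finite extension of size exceeding $D$). The only cosmetic point is that Lemma~\ref{lem:many2two} is stated for a \emph{finite} set $\cA$, so when $\L$ (or $\L'$) is infinite you should take $\cA$ to be any finite subset with more than $D$ elements rather than $\cA=\L$.
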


\begin{proof} Taking the algebraic closure $\Lbar$ of $\L$ (which is not necessary if $\L$ is infinite, or even finite but satisfying  
$\#\L > D$), we see from Lemma~\ref{lem:many2two}
that there are some $\alpha_3, \ldots, \alpha_s \in \Lbar$ such that $f_1$ and $f_0$, defined as in Lemma~\ref{lem:many2two} are relatively prime. 
 Note that $\deg f_0\le D$ and hence using the standard Euclidean algorithm we see that 
there are  $b_0, b_1 \in \Lbar[x]$ of degrees at most $D-1$  such that 
$$
b_0 f_0 + b_1f_1 =1
$$
and we obtain~\eqref{eq:Bezout} with $b_1, b_2 = b_0, b_3= \alpha_3 b_0, \ldots, b_s =\alpha_s b_0 \in \Lbar [x]$.

We now observe that~\eqref{eq:Bezout} defines a system of linear equations for $Ds$ coefficients of 
polynomials $b_1, \ldots, b_s$. The above argument shows that that system is compatible over $\Lbar$,
and since the coefficients of this system are defined over $\L$, then this means that the system is compatible over $\L$ as well.
Hence we can find  polynomials $b_1, \ldots, b_s \in \L[x]$ satisfying~\eqref{eq:Bezout}. 
\end{proof} 

%
%Throughout our paper, we have that $q$ is a power of the prime $p$ and $\Fq$ is the corresponding finite field. We also let $K:=\Fq(t)$.

%%%%%%%%%%%%%%%%%%%%%%%%%%%%%%%%%%%%%%%%%%%%%%%%%%%%%%%%%%%%%%%%%%%%%%%%%%%%%%%
%
%\subsection{Heights of polynomials}
%
%
%
%
%Also, as always, for a polynomial $f\in K[x_1,\ldots, x_m]$, its degree $\deg f$ is simply its total degree (in the variables $x_1,\ldots, x_m$).

%%%%%%%%%%%%%%%%%%%%%%%%%%%%%%%%%%%%%%%%%%%%%%%%%%%%%%%%%%%%%%%%%%%%%%%%%%%%%%%

\subsection{Preserving number of common zeros in specializations of  polynomials over a function field}

%%We only need to employ in our proof the case $m=1$ of the following result; however, as explained in Remark~\ref{rem:three}, the general case in Lemma~\ref{lem:reduction} might become useful in the future if  higher-dimensional extensions of Theorem~\ref{thm:unlikely intersections} would be obtained.  

%\begin{lemma}
%\label{lem:reduction}
%Let $f_1,\ldots, f_s\in \Fq[t][x_1,\ldots, x_m]$ be polynomials of degree at most $D$ and of heights at most $H$. Assume that there exist at most $N_0$ solutions  $(x_1,\ldots, x_m)\in \overline{\Fq(t)}^m$ of the system of equations
%$$
%f_1(t; x_1, \ldots, x_m) =\ldots =f_s(t; x_1, \ldots, x_m)  =0.
%$$
%Then there exists a nonzero polynomial $B\in \Fq[t]$ of degree at most $XXX-??$
% with the property that for each $\tau\in\Fqbar$ such that $B(\tau)\ne 0$, we have for  the specialization polynomials $f_i\mapsto \overline{f_i}$ corresponding to the specialization $t\mapsto \tau$  the system of polynomial equations  
%$$\overline{f_1}(c; x_1, \ldots, x_m) =\cdots =\overline{f_s}(c; x_1, \ldots, x_m)=0$$ 
%with at most $N_0$ solutions $(x_1,\ldots, x_m)\in \Fqbar^m$. 
%\end{lemma}

\begin{definition}
For a polynomial $f\in \Fq[t][x]$, we define  its height $\fh(f)$ as the maximum degree of the coefficients of $f$, which are themselves polynomials in $\Fq[t]$. (Note that for any element in $\Fq(t)$, its Weil height is simply its degree as a rational function.)
\end{definition} 

One of our main tools is the following result. Note that in our proof of Theorem~\ref{thm:main}, we only employ Lemma~\ref{lem:reduction} for two polynomials (that is, $s=2$ with the notation as in Lemma~\ref{lem:reduction}); however, we include this more general version of Lemma~\ref{lem:reduction} which may be useful beyond our current application.
For further generalisations see Section~\ref{sec:comments}. 

\begin{lemma}
\label{lem:reduction}
Let $f_1,\ldots, f_s\in \Fq[t][x]$ be polynomials of degree at most $D$ and of heights at most $H$. Assume that there exist at most $N_0$ common zeros in $\overline{\Fq(t)}$ of the polynomials $f_1(x), \ldots , f_s(x)$.
Then there exists a nonzero polynomial $\cT\in\Fq[T]$ of degree at most $(2D+1)H$
 with the property that for each $\tau \in\Fqbar$ such that $\cT(\tau)\ne 0$,  for  the  specialization polynomials 
 $f_i(x)\mapsto f_i(\tau; x)$, $i=1, \ldots, s$, corresponding to the specialization $t\mapsto \tau$,  we have that  the polynomials  
$$f_1(\tau; x), \ldots,  f_s(\tau;x)\in \Fqbar[x]$$ 
have at most $N_0$ common zeros in $\Fqbar$. 
\end{lemma}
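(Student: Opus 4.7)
Plan: The strategy is to trap the common zeros of any specialization inside the root set of the specialized greatest common divisor, by producing a suitable B\'ezout-type identity. I would begin by letting $d(t,x)\in \bF_q[t][x]$ be a primitive representative of $\gcd(f_1,\ldots,f_s)$ taken in $\bF_q(t)[x]$; the hypothesis on common zeros in $\overline{\bF_q(t)}$ then translates into $\deg_x d\le N_0$. By Gauss's lemma one may factor $f_i = d\cdot g_i$ in $\bF_q[t][x]$, with the $g_i$ jointly relatively prime in $\bF_q(t)[x]$ and of $x$-degree at most $D$.

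Next, I would apply Lemma~\ref{lem:Bezout} over the field $\bF_q(t)$ to the polynomials $g_1,\ldots,g_s$, obtaining $b_1,\ldots,b_s\in \bF_q(t)[x]$ of $x$-degree at most $D$ with $\sum_i b_i g_i = 1$. Clearing denominators produces $\widetilde{b}_i \in \bF_q[t][x]$ and a nonzero $\cS(t)\in \bF_q[t]$ satisfying $\sum_i \widetilde{b}_i g_i = \cS(t)$; multiplying through by $d$ yields the central identity
\begin{equation*}
\sum_{i=1}^{s} \widetilde{b}_i(t,x)\cdot f_i(t,x) \;=\; \cS(t)\cdot d(t,x).
\end{equation*}
I would then set $\cT(T) := \cS(T)$. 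For any $\tau\in \Fqbar$ with $\cT(\tau)\ne 0$, specializing $t\mapsto \tau$ in this identity shows that every common zero of $f_1(\tau;x),\ldots,f_s(\tau;x)$ must also be a zero of $d(\tau;x)$. Since $d$ was chosen primitive in $\bF_q[t][x]$, the specialization $d(\tau;x)$ is not identically zero, and hence has at most $\deg_x d\le N_0$ roots in $\Fqbar$, delivering the required bound on the number of common zeros.

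The main technical obstacle is extracting the degree estimate $\deg \cT\le (2D+1)H$. For this, I would track through the proof of Lemma~\ref{lem:Bezout}: after using Lemma~\ref{lem:many2two} to reduce to two coprime polynomials $g_1$ and $g_0 = g_2 + \sum_{i\ge 3}\alpha_i g_i$ of $x$-degree at most $D$ and $t$-height at most $H$, the B\'ezout relation $b_1 g_1 + b_0 g_0 = 1$ becomes a square linear system of size at most $2D+1$ over $\bF_q[t]$, whose entries are polynomials in $t$ of degree at most $H$. By Cramer's rule, $\cS(t)$ can be taken as the (nonzero) determinant of this coefficient matrix, and the elementary bound $\deg\det \le (2D+1)H$ completes the estimate.
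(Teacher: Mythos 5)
Your overall strategy is the same as the paper's (primitive gcd via Gauss's lemma, a B\'ezout identity from Lemma~\ref{lem:Bezout}, clearing denominators by Cramer's rule so that the bad specializations are the zeros of a determinant, and primitivity of the gcd to ensure its specialization is not identically zero), but one step is false as stated: the hypothesis does \emph{not} translate into $\deg_x d\le N_0$. The gcd can have degree strictly larger than the number of distinct common zeros, e.g.\ $f_1=f_2=(x-t)^2$ with $N_0=1$, or, exploiting inseparability, $f_1=f_2=x^p-t$, again with a single common zero but $\deg_x d=p$. Consequently your final count ``$d(\tau;x)$ has at most $\deg_x d\le N_0$ roots'' does not follow. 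What is true, and what the paper uses, is that $d$ has at most $N_0$ \emph{distinct} roots in $\overline{\Fq(t)}$ (its root set is exactly the common zero set of $f_1,\ldots,f_s$), together with the fact that specialization cannot increase the number of distinct roots: if $d(\tau;x)\not\equiv 0$, every root of $d(\tau;x)$ in $\Fqbar$ is the reduction of a root of $d$ under an extension of the place $t\mapsto\tau$ to $\overline{\Fq(t)}$, so $d(\tau;x)$ has at most $N_0$ distinct roots. Replacing your degree inference by this observation repairs the argument without changing anything else.

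A smaller point concerns the degree bound. Your route tracks the two-polynomial reduction inside the proof of Lemma~\ref{lem:Bezout}, and the claim that $g_0=g_2+\sum_{i\ge 3}\alpha_i g_i$ still has height at most $H$ requires the $\alpha_i$ to be constants; since Lemma~\ref{lem:many2two} needs $\#\cA>D$, when $q\le D$ you must either take $\alpha_i\in\Fqbar\setminus\Fq$ (and then the Sylvester determinant need not lie in $\Fq[t]$) or take non-constant $\alpha_i$ (inflating the height). The paper avoids this by using Lemma~\ref{lem:Bezout} only as an existence statement over $\Fq(t)$ and applying Cramer's rule directly to the full linear system $b_1g_1+\cdots+b_sg_s=1$ in the $Ds$ unknown coefficients: its entries are coefficients of the $g_i$, of $t$-degree at most $H$ because $\deg_t$ is additive in the factorization $f_i=d\,g_i$, and a maximal nonsingular square submatrix has size at most $2D$, giving $\deg\Delta\le 2DH\le (2D+1)H$. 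Your choice $\cT=\cS$, with primitivity of $d$ guaranteeing $d(\tau;x)\not\equiv 0$ for every $\tau$, is fine and in fact slightly cleaner than the paper's extra factor $h_{\rm gcd}$, which equals $1$ for a primitive gcd.
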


\begin{proof}
Let $h \in \Fq[t][x]$ be the greatest primitive common divisor of $f_1,\ldots, f_s$, 
that is, the coefficients of $h$ are relatively prime as polynomials in $t$. 
We first claim that considering $f_1,\ldots, f_s, h$ as bivariate polynomials in $\Fq[t,x]$ 
we still have the divisibility  $h \mid f_i$, $i =1,\ldots, s$. 
Indeed, we know that $h$ divides $f_i$, $i =1,\ldots, s$, 
in the ring $\Fq(t)[x]$ and so there are $g_i \in \Fq[t][x]$ and $b \in \Fq[t]$ 
such that $b$ is relatively prime to $g_1, \ldots, g_s$ as polynomials in $\Fq[t,x]$ and  
$$
f_i = (g_i/b)\cdot h
$$
that is 
$$
bf_i =  hg_i
$$
Since $h$ is primitive and  $\Fq[t,x]$ is a unique factorisation domain, we deduce that $b\in \F_q$. 

Hence for $g_i := f_i/h$ we have  $g_i \in \Fq[t][x]$, $i =1,\ldots, s$. 
Furthermore, we see that $g_1, \ldots, g_s$ have no common zeros in $\overline{\Fq(t)}$  
and hence by Lemma~\ref{lem:Bezout} there are for some $b_1, \ldots, b_s \in \Fq(t)[x]$
of degree at most $D-1$ such that 
\begin{equation}
\label{eq:bigi}
b_1g_1+\ldots+b_sg_s=1. 
\end{equation}

We now see that~\eqref{eq:bigi} can be rewritten as a system of at most $2D$ linear equations over $\Fq(t)$
in $Ds$ variables. Choosing that largest nonsingular submatrix of the matrix defining this system, from the Cramer rule we see that~\eqref{eq:bigi} has a solution in $b_i = c_i/\Delta$ where
$c_i \in \Fq[t][x]$, $i=1, \ldots, s$ and  $\Delta \in \Fq[t]$ is a polynomial of degree (in $t$)  
$$
\deg(\Delta) \le 2DH.
$$
We now write~\eqref{eq:bigi}  as $c_1g_1+\ldots+c_sg_s=\Delta$ or 
\begin{equation}
\label{eq:cigid}
c_1f_1+\ldots+c_sf_s=\Delta h.
\end{equation}
Therefore, for any $\tau \in \Fqbar$ which is not a root of $\Delta(t)$ we derive from~\eqref{eq:cigid} that  
$$
c_1(\tau; x) f_1(\tau; x)+\ldots+c_s(\tau; x)f_s(\tau; x) =\Delta(\tau) h(\tau; x).
$$

By our assumption, $h(x)$ has at most $N_0$ roots in $\overline{\Fq(t)}$; hence, 
unless  $h(\tau; x)$ is identically equal to zero, then it must have at most $N_0$ roots in $\Fqbar$.

Since as we have observed  the bivariate polynomial $h\in \Fq[t,x]$ divides, for example, $f_1\in\Fq[t,x]$, then 
we see that $\fh(h) \le H$. In particular, the greatest monic common divisor $h_{{\rm gcd}}(t)\in\Fq[t]$ of all the coefficients of $h(x)$ (which are themselves polynomials in $\Fq[t]$) must have degree at most $H$.  Hence,  letting $\cT(t):=\Delta(t)\cdot h_{{\rm gcd}}(t)$, we obtain a nonzero polynomial in $\Fq[t]$ of degree at most $2DH+H=(2D+1)H$ which satisfies the desired conclusion from Lemma~\ref{lem:reduction}.  
\end{proof}

%%%%%%%%%%%%%%%%%%%%%%%%%%%%%%%%%%%%%%%%%%%%%%%%%%%%%%%%%%%%%%%%%%%%%%%%%%%%%%%

\subsection{Technical background on Drinfeld modules}
\label{subsec:Drinfeld}

In this Section we derive some basic results regarding the growth of the heights of the polynomials in a family of Drinfeld modules; for more results regarding heights for Drinfeld modules, we refer the reader to~\cite{Denis, G-JNT, GH-Acta, G-T}.

Let $r\ge 2$ be an integer, and let $K:=\Fq(t)$. We consider the family of Drinfeld modules $\Phi^{(z)}:\Fq[T]\lra {\rm End}_{K(z)}(\bG_a)$ given by $\Phi^{(z)}_T(x):=tx+z x^q+x^{q^r}$ parametrized by $z\in \Kbar$. 

Let $\bfa\in K$ be a nonzero point; we write $\bfa={\bfa_1}/{\bfa_2}$ with (nonzero) relatively prime polynomials $\bfa_1,\bfa_2\in \Fq[t]$. Let $D:=\deg \bfa$ (or equivalently, $D$ is the height of $\bfa$, as defined in~\cite{G-JNT}), that is, $D:=\max\{\deg_t \bfa_1,\deg_t \bfa_2\}$. Then for each positive integer $n$, we have that $\Phi_{T^n}^{(z)}(\bfa)$ is a polynomial in  $K[z]$; for more details, see \cite[Section~4]{GH-Acta}. 

More precisely, let 
$$
f_n(z):=\Phi_{T^n}^{(z)}(\bfa),
$$
then as proven in \cite[Lemma~4.2]{GH-Acta}, we have that $\deg_z(f_n(z))=q^{r(n-1)}$ (since $f_1(z)=\bfa^q \cdot z + t\bfa+\bfa^{q^r}$ is a linear polynomial in $z$). Also, the leading coefficient of $f_n(z)$ is $\bfa^{q^{1+r(n-1)}}$. Furthermore, for each $P(T)\in \Fq[T]$, we may consider the polynomial in $z$ given by $z\mapsto \Phi^{(z)}_{P(T)}(\bfa)$, which is a polynomial with coefficients in $K$. In particular, we have the following immediate corollary of \cite[Lemma~4.2]{GH-Acta}.

\begin{lemma}
\label{lem:two}
Let $0\ne \bfa\in K$ and let $P(T)\in\Fq[T]$ be a non-constant polynomial. Then the map $z\mapsto \Phi^{(z)}_{P(T)}(\bfa)$ is a polynomial in $K[z]$ of degree $q^{r(\deg(T)-1)}$.  
\end{lemma}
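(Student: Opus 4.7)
The plan is to reduce the general case to the monomial case $P(T) = T^n$, which has already been handled in the discussion preceding the lemma (following \cite[Lemma~4.2]{GH-Acta}). The crucial input is the $\Fq$-linearity (additivity) of every $\Phi^{(z)}_{Q(T)}$ together with the fact that the degrees in $z$ of the iterates $f_n(z) = \Phi^{(z)}_{T^n}(\bfa)$ grow strictly with $n$, so no top-degree cancellation is possible when we expand $P(T)$ as a sum of monomials.

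Concretely, I would write $P(T) = \sum_{i=0}^{d} c_i T^i$ with $c_i\in\Fq$, $d = \deg(P)\ge 1$, and $c_d \ne 0$. Since each $\Phi^{(z)}_T$ is an additive polynomial and the action of $\Fq[T]$ on $\bG_a$ is $\Fq$-linear, one obtains
\[
\Phi^{(z)}_{P(T)}(\bfa) \;=\; \sum_{i=0}^{d} c_i \, \Phi^{(z)}_{T^i}(\bfa) \;=\; \sum_{i=0}^{d} c_i f_i(z),
\]
where $f_0(z) = \bfa$ is a (nonzero) constant in $K$, and by the recalled consequence of \cite[Lemma~4.2]{GH-Acta}, for every $i\ge 1$ the polynomial $f_i(z) \in K[z]$ has degree exactly $q^{r(i-1)}$ with leading coefficient $\bfa^{q^{1+r(i-1)}}$.

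Next, I would observe that $i\mapsto q^{r(i-1)}$ is strictly increasing for $i \ge 1$ (since $r\ge 2$), so the only contribution to the top-degree term of $\sum_{i=0}^{d} c_i f_i(z)$ comes from $i=d$. This term is $c_d\,\bfa^{q^{1+r(d-1)}} z^{q^{r(d-1)}}$, and its coefficient is nonzero because $c_d \in \Fq^*$ and $\bfa\in K^*$ (so all its Frobenius powers remain nonzero). Therefore $\Phi^{(z)}_{P(T)}(\bfa)$ is a polynomial in $z$ of degree exactly $q^{r(\deg(P)-1)}$, as claimed (interpreting the formula $q^{r(\deg(T)-1)}$ in the statement as $q^{r(\deg(P)-1)}$).

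I do not expect any genuine obstacle: the whole point of the argument is that additivity turns $\Phi^{(z)}_{P(T)}(\bfa)$ into an $\Fq$-linear combination of the $f_i(z)$ whose $z$-degrees are distinct powers of $q^r$, so no cancellation of leading terms can occur. The only mild check is that the leading coefficient $\bfa^{q^{1+r(d-1)}}$ does not vanish, which is immediate from $\bfa \ne 0$ in the field $K$.
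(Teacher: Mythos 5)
Your argument is correct and is essentially what the paper intends: the paper states the lemma as an immediate corollary of \cite[Lemma~4.2]{GH-Acta}, the point being exactly that $\Fq$-linearity of $\Phi^{(z)}$ expresses $\Phi^{(z)}_{P(T)}(\bfa)$ as $\sum_i c_i f_i(z)$ with the $z$-degrees $q^{r(i-1)}$ strictly increasing and the top leading coefficient $c_d\bfa^{q^{1+r(d-1)}}\ne 0$. Your reading of the exponent as $q^{r(\deg_T P-1)}$ (a typo in the statement) is also the intended one.
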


 In our proof of Theorem~\ref{thm:main} we also need the following result, 
 which gives us control on the heights of the coefficients of $f_n(z)$.

\begin{lemma}
\label{lem:one 0}
Let $\bfa:=\bfa_1/\bfa_2\in K=\Fq(t)$, where $\bfa_1$ and $\bfa_2$ are nonzero coprime polynomials in $\Fq[t]$. As before, we let $f_n(z):=\Phi_{T^n}^{(z)}(\bfa)\in K[z]$ for each positive integer $n$. Then $g_n(z):=\bfa_2^{q^{rn}}\cdot f_n(z)\in \Fq[t][z]$ and moreover, $$\fh(g_n)\le (2+\deg(\bfa))^{q^{rn}}.$$  
\end{lemma}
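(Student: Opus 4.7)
The plan is a straightforward induction on $n\ge 0$, using the functional recursion
\begin{equation*}
f_{n+1}(z)=\Phi_T^{(z)}(f_n(z))=t\,f_n(z)+z\,f_n(z)^q+f_n(z)^{q^r}
\end{equation*}
and bookkeeping the powers of $\bfa_2$ that must be cleared at each step.

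For the base case $n=0$ we have $g_0=\bfa_2\cdot f_0=\bfa_1\in\Fq[t]$, with $\fh(g_0)=\deg\bfa_1\le D\le (2+D)^{q^0}$. For the inductive step, I would multiply the recursion for $f_{n+1}$ through by $\bfa_2^{q^{r(n+1)}}$, splitting this power across the three summands via the identities
\begin{equation*}
\bfa_2^{q^{r(n+1)}}=\bfa_2^{q^{rn}(q^r-1)}\cdot\bfa_2^{q^{rn}}=\bfa_2^{q^{rn+1}(q^{r-1}-1)}\cdot\bfa_2^{q^{rn+1}}=\bigl(\bfa_2^{q^{rn}}\bigr)^{q^r},
\end{equation*}
thereby obtaining the recurrence
\begin{equation*}
g_{n+1}(z)=t\,\bfa_2^{q^{rn}(q^r-1)}\,g_n(z)+z\,\bfa_2^{q^{rn+1}(q^{r-1}-1)}\,g_n(z)^q+g_n(z)^{q^r}.
\end{equation*}
The crucial observation is that the exponent $q^{rn+1}(q^{r-1}-1)$ is non-negative \emph{precisely because $r\ge 2$}; this pins down where the standing hypothesis enters and guarantees $g_{n+1}\in\Fq[t][z]$ whenever $g_n$ is.

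For the height bound, set $H_n:=\fh(g_n)$ and $E:=\deg\bfa_2\le D$. Using the standard inequalities $\fh(uv)\le\fh(u)+\fh(v)$, $\fh(u^k)\le k\,\fh(u)$, and $\fh(u+v)\le\max(\fh(u),\fh(v))$, the three summands above have heights at most $1+q^{rn}(q^r-1)E+H_n$, $q^{rn+1}(q^{r-1}-1)E+qH_n$, and $q^r H_n$ respectively; each is crudely bounded by $q^{r(n+1)}D+q^r H_n+1$. Invoking the inductive hypothesis $H_n\le(2+D)^{q^{rn}}$ and writing $M:=(2+D)^{q^{rn}}\ge 2$, the inequality
\begin{equation*}
H_{n+1}\le q^{r(n+1)}D+q^r M+1\le M^{q^r}=(2+D)^{q^{r(n+1)}}
\end{equation*}
closes the induction with ample margin, using only $q^r\ge 4$ together with the elementary fact $q^{rn}D\le M$ (which follows from $(2+D)^k\ge kD$ for $k\ge 1$).

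The proof is essentially algebraic bookkeeping and presents no serious obstacle. The only delicate moment is verifying the non-negativity of the $\bfa_2$-exponents appearing in the recurrence, which is exactly where the hypothesis $r\ge 2$ is required; after that, the target bound $(2+D)^{q^{rn}}$ is so generous that the induction closes with large margin.
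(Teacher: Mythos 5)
Your proof is correct and takes essentially the same route as the paper: clear denominators to get the recurrence $g_{n+1}(z)=g_n(z)^{q^r}+\bfa_2^{\,q^{r(n+1)}-q^{rn+1}}\,z\,g_n(z)^q+t\,\bfa_2^{\,q^{r(n+1)}-q^{rn}}\,g_n(z)$ and induct on the height bound, with your write-up merely making explicit the estimates the paper leaves as ``an easy induction.'' One cosmetic quibble: the $\bfa_2$-exponents are already nonnegative for every $r\ge 1$ (for $r=1$ the middle exponent is $0$), so $r\ge 2$ is not what their nonnegativity hinges on.
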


\begin{proof}
We have that $f_1(z)=\bfa^qz+t\bfa+\bfa^{q^r}$ and so,
$$g_1(z)=\bfa_2^{q^r}\cdot f_1(z)=\bfa_1^q\bfa_2^{q^r-q}z+ t\bfa_1\bfa_2^{q^r-1} + \bfa_1^{q^r}\in \Fq[t][z];$$
furthermore, $\fh(g_1)\le 1+\deg(\bfa)^{q^r}<(2+\deg(\bfa))^{q^r}$. Also, we have that for each positive integer $n$, 
$$f_{n+1}(z)=f_n(z)^{q^r}+zf_n(z)^q +tf_n(z)$$
and so, 
\begin{equation}
\label{eq:last term}
\begin{split}
g_{n+1}(z)& =\bfa_2^{q^{r(n+1)}}f_{n+1}(z)\\
& = g_n(z)^{q^r} + \bfa_2^{q^{r(n+1)}-q^{1+rn}}zg_n(z)^q + t\bfa_2^{q^{r(n+1)}-q^{rn}}g_n(z).
\end{split} 
\end{equation}
Therefore, an easy induction yields that $g_{n}(z)\in \Fq[t][z]$ and furthermore, $\fh(g_{n}(z))\le (2+\deg(\bfa))^{rn}$ for all $n\in\mathbb{N}$; note that since $\deg(\bfa_2)\le \deg(\bfa)$, then the contribution to the height of $g_{n+1}(z)$ of the last term in \eqref{eq:last term} is at most equal to
$$1+\deg(\bfa)^{q^{r(n+1)}-q^{rn}}\cdot (2+\deg(\bfa))^{q^{rn}}\le (2+\deg(\bfa))^{q^{r(n+1)}}.$$
This concludes our proof of Lemma~\ref{lem:one 0}.
\end{proof}

Finally, the next result is an immediate consequence of Lemma~\ref{lem:one 0}.

\begin{lemma}
\label{lem:one}
Let $\bfa:=\bfa_1/\bfa_2$ for some nonzero coprime polynomials $\bfa_1,\bfa_2\in \Fq[t]$. Then for each non-constant polynomial $P(T)\in \Fq[T]$, the map 
$$z\mapsto \bfa_2^{q^{r\deg_T(P(T))}}\cdot \Phi^{(z)}_{P(T)}(\bfa)$$
is a polynomial in $\Fq[t][z]$ of height at most $(2+\deg(\bfa))^{q^{r\deg_T(P(T))}}$.  
\end{lemma}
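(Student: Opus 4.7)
The plan is to reduce the statement for an arbitrary non-constant $P(T)\in\Fq[T]$ to the monomial case $P(T)=T^n$ that is already handled by Lemma~\ref{lem:one 0}. First I would write $P(T)=\sum_{i=0}^n c_i T^i$ with $n=\deg_T(P(T))$, $c_i\in\Fq$, and $c_n\ne 0$. Since the map $P\mapsto \Phi^{(z)}_P$ from $\Fq[T]$ into the endomorphism ring of $\bG_a$ over $K(z)$ is a ring homomorphism whose restriction to $\Fq$ sends $c$ to the multiplication-by-$c$ map, evaluation at $\bfa$ yields
$$\Phi^{(z)}_{P(T)}(\bfa)=\sum_{i=0}^n c_i\, f_i(z),$$
where $f_i(z):=\Phi^{(z)}_{T^i}(\bfa)$, with the convention $f_0(z)=\bfa$.

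Next I would multiply through by $\bfa_2^{q^{rn}}$ and apply Lemma~\ref{lem:one 0} term by term (the $i=0$ case reducing to the trivial identity $\bfa_2\cdot \bfa=\bfa_1$). This gives
$$\bfa_2^{q^{rn}}\,\Phi^{(z)}_{P(T)}(\bfa)=\sum_{i=0}^n c_i\,\bfa_2^{q^{rn}-q^{ri}}\,g_i(z),$$
where $g_i(z):=\bfa_2^{q^{ri}}f_i(z)\in\Fq[t][z]$ satisfies $\fh(g_i)\le(2+\deg\bfa)^{q^{ri}}$. Each summand lies in $\Fq[t][z]$, so the whole expression does, which establishes the first part of the conclusion. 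The $t$-height of the $i$-th summand is at most
$$\deg(\bfa)\bigl(q^{rn}-q^{ri}\bigr)+(2+\deg\bfa)^{q^{ri}},$$
and since the height of a finite sum is at most the maximum of the heights of its summands, it suffices to bound this quantity by $(2+\deg\bfa)^{q^{rn}}$.

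The remaining point is the elementary inequality
$$\deg(\bfa)\bigl(q^{rn}-q^{ri}\bigr)+(2+\deg\bfa)^{q^{ri}}\le (2+\deg\bfa)^{q^{rn}},$$
which I would verify by setting $d:=\deg\bfa$, factoring $(2+d)^{q^{rn}}=(2+d)^{q^{ri}}\cdot(2+d)^{q^{rn}-q^{ri}}$, and using the bounds $(2+d)^{q^{rn}-q^{ri}}\ge 2^{q^{rn}-q^{ri}}\ge q^{rn}-q^{ri}+1$ together with $(2+d)^{q^{ri}}\ge \max(1,d)$ to absorb both the additive term and the linear correction. There is essentially no serious obstacle here: the $\Fq$-linearity of the Drinfeld action reduces the problem to the monomial case that is already controlled, and the doubly exponential growth of $(2+d)^{q^{rn}}$ easily swallows the extra $\bfa_2^{q^{rn}-q^{ri}}$ factors introduced to clear denominators for $i<n$.
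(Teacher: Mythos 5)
Your argument is correct and is exactly the elaboration the paper intends when it calls Lemma~\ref{lem:one} an ``immediate consequence'' of Lemma~\ref{lem:one 0}: write $P(T)=\sum_i c_iT^i$, use $\Fq$-linearity of $P\mapsto\Phi^{(z)}_P$ to reduce to the monomial case, clear denominators, and check that $(2+\deg\bfa)^{q^{rn}}$ absorbs the extra powers of $\bfa_2$ and the heights $(2+\deg\bfa)^{q^{ri}}$. The final elementary inequality is verified correctly, so there is nothing to add.
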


%%%%%%%%%%%%%%%%%%%%%%%%%%%%%%%%%%%%%%%%%%%%%%%%%%%%%%%%%%%%%%%%%%%%%%%%%%%%%%%
%%%%%%%%%%%%%%%%%%%%%%%%%%%%%%%%%%%%%%%%%%%%%%%%%%%%%%%%%%%%%%%%%%%%%%%%%%%%%%%

\section{Proof of Theorem~\ref{thm:main}}
\label{sec:proofs}

\subsection{Bounds on degree and height of some polynomials} 
So, we have $\bfa,\bfb\in K=\Fq(t)$ which are $\Fq$-linearly independent, and we write 
$$\bfa:=\frac{\bfa_1}{\bfa_2} \mand \bfb:=\frac{\bfb_1}{\bfb_2}
$$ 
with nonzero relatively prime polynomials $\bfa_1,\bfa_2\in\Fq[t]$ (resp. $\bfb_1,\bfb_2\in \Fq[t]$). Then  $D:=\max\{\deg(\bfa),\deg(\bfb)\}$ is a positive integer since otherwise, we would have that both $\bfa$ and $\bfb$ are elements of $\Fq$ and thus, they would be $\Fq$-linearly dependent, contradicting our hypothesis.

According to Theorem~\ref{thm:unlikely intersections}, there exist at most $N_0$ points $z\in \Kbar$ such that both $\bfa$ and $\bfb$ are torsion points for the Drinfeld module $\Phi^{(z)}:\Fq[T]\lra {\rm End}_{K(z)}(\bG_a)$ given by $\Phi^{(z)}_T(x)=tx+z x^q+x^{q^r}$. At the expense of replacing $N_0$ by a larger positive integer, we may also assume that neither $\bfa_2$ nor $\bfb_2$ has a root $\tau$ generating an extension of $\Fq$ of degree at least equal to $N_0$.

Now, let $\ell\ge N_0$ and let $\tau$ be a generator for the finite fields extension $\F_{q^\ell}/\F_q$. Then we can specialize both $\bfa$ and $\bfb$ as $t\mapsto \tau$ (since $\bfa_2(\tau)$ and $\bfb_2(\tau)$ are both nonzero). 

Let $\lambda_0\in \Fqbar$. Then both $\bfa(\tau)$ and $\bfb(\tau)$ are torsion points for the Drinfeld module $\Phi^{(\tau,\lambda_0)}:\Fq[T]\lra {\rm End}_{\Fqbar}(\bG_a)$ given by 
$$
\Phi^{(\tau,\lambda_0)}_T(x)=\tau x+\lambda_0 x^q + x^{q^r}
$$ 
since each element in $\Fqbar$ would be torsion for the Drinfeld module $\Phi^{(\tau,\lambda_0)}$ (which is itself defined over $\Fqbar$). 
Now, assume their respective orders $P(T)$ and $Q(T)$ are both of degree at most $M$ (in $T$) for some real number $M>1$. 

For each non-constant polynomial $R(T)\in\Fq[T]$, we let
\begin{align*}
& g_{\bfa, R(T)}(z):=\bfa_2^{q^{r\deg_T R}}\cdot \Phi^{(z)}_{R(T)}(\bfa), \\
& g_{\bfb, R(T)}(z):=\bfb_2^{q^{r\deg_T R}}\cdot \Phi^{(z)}_{R(T)}(\bfb).
\end{align*}
Then 
$$g_{\bfa,R(T)}(z),g_{\bfb,R(T)}(z)\in\Fq[t][z]
$$ 
are polynomials of height at most $(2+D)^{q^{r\deg_T R}}$ (according to Lemma~\ref{lem:one}) and of degree $q^{r(\deg_T(R) -1)}$ (according to Lemma~\ref{lem:two}).    
Then let 
\begin{align*}
& \tilde{g}_{\bfa,M}(z):=\prod_{\substack{1\le \deg_T R\le M\\ R\text{ is monic}}}g_{\bfa, R(T)}(z)\\
& \tilde{g}_{\bfb, M}(z):=\prod_{\substack{1\le \deg_T R\le M\\ R\text{ is monic}}}g_{\bfb,R(T)}(z).
\end{align*}

The following two results are immediate consequences of Lemmas~\ref{lem:two} and~\ref{lem:one}. 

\begin{lemma}
\label{lem:two easy}
With the above notation,  both $\tilde{g}_{\bfa,M}(z)$ and $\tilde{g}_{\bfb,M}(z)$ have degree less than $q^{(M+1)(r+1)}$.
\end{lemma}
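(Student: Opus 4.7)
The plan is to bound the degree of each factor via Lemma~\ref{lem:two}, count the factors using the standard count of monic polynomials of a given degree over $\Fq$, and then collapse the resulting geometric sum.

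First, I would observe that the prefactors $\bfa_2^{q^{r\deg_T R}}$ and $\bfb_2^{q^{r\deg_T R}}$ are polynomials in $t$ only, so they do not affect the degree in $z$. Hence, by Lemma~\ref{lem:two}, each factor $g_{\bfa,R(T)}(z)$ (and similarly $g_{\bfb,R(T)}(z)$) has degree in $z$ equal to $q^{r(\deg_T R-1)}$. Since the number of monic polynomials in $\Fq[T]$ of exact degree $d$ is $q^d$, combining these observations yields
\begin{equation*}
\deg_z\tilde{g}_{\bfa,M}(z)\;=\;\sum_{d=1}^{M}q^d\cdot q^{r(d-1)}\;=\;q^{-r}\sum_{d=1}^{M}q^{d(r+1)},
\end{equation*}
and exactly the same identity holds for $\deg_z\tilde{g}_{\bfb,M}(z)$.

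The remaining step is to estimate the geometric sum. Summing it explicitly,
\begin{equation*}
q^{-r}\sum_{d=1}^{M}q^{d(r+1)}\;=\;q^{-r}\cdot\frac{q^{(M+1)(r+1)}-q^{r+1}}{q^{r+1}-1}\;<\;\frac{q^{(M+1)(r+1)-r}}{q^{r+1}-1}.
\end{equation*}
Since $q\ge 2$, we have $q^{r+1}-1\ge q^{r+1}-q^r=q^r(q-1)\ge q^r$, so the preceding bound is at most $q^{(M+1)(r+1)-2r}$. As $r\ge 2$ (so in particular $2r>0$), this is strictly less than $q^{(M+1)(r+1)}$, which is the desired bound.

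There is no genuine obstacle here; the argument is just a bookkeeping computation whose only inputs are the degree formula from Lemma~\ref{lem:two} and the count $q^d$ of monic polynomials of degree $d$. The only point worth flagging is that one must remember that the factors $\bfa_2^{q^{r\deg_T R}}$ and $\bfb_2^{q^{r\deg_T R}}$ used in the definitions of $g_{\bfa,R(T)}$ and $g_{\bfb,R(T)}$ lie in $\Fq[t]$ and therefore contribute nothing to the degree in the variable $z$.
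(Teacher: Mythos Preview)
Your proof is correct and follows essentially the same approach as the paper: count the $q^d$ monic polynomials of each degree $d$, apply Lemma~\ref{lem:two} for the degree of each factor, and sum. You are simply more explicit than the paper in carrying out the final geometric-sum estimate and in noting that the $\bfa_2$- and $\bfb_2$-prefactors lie in $\Fq[t]$ and hence do not affect the degree in $z$.
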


\begin{proof} %%[Proof of Lemma~\ref{lem:two easy}.]
For each $s=1,\ldots, M$, we have $q^s$ monic polynomials $R(T)\in \Fq[T]$ of degree $s$. Since for each monic polynomial $R(T)\in\Fq[T]$ of degree $s$, we have   $\deg_z(g_{\bfa,R(T)}(z))=q^{r(s-1)}$ (according to Lemma~\ref{lem:two}), then we obtain the desired conclusion from Lemma~\ref{lem:two easy}. 
\end{proof}

\begin{lemma}
\label{lem:one easy}
With the above notation, we have that $\fh(\tilde{g}_{\bfa,M}),\fh(\tilde{g}_{\bfb, M})\le c_2(2+D)^{q^{(r+1)(M+1)}}$ for some constant $c_2$ depending only on $D$, $q$ and $r$.
\end{lemma}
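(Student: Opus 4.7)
The plan is to combine Lemma~\ref{lem:one} with the elementary sub-additivity of heights under multiplication, together with a count of the number of monic polynomials of bounded degree in $\Fq[T]$. The target bound $(2+D)^{q^{(r+1)(M+1)}}$ is very generous, so the estimate should go through with plenty of slack; the only thing to be careful about is to incorporate the count $q^s$ of monic polynomials of degree $s$ before summing.

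First I would record the trivial height inequality $\fh(fg)\le \fh(f)+\fh(g)$ for $f,g\in\Fq[t][z]$: if $f=\sum_i f_i(t)z^i$ and $g=\sum_j g_j(t)z^j$, then the coefficient of $z^k$ in $fg$ is a sum of terms $f_i(t)g_j(t)$ with $i+j=k$, each of $t$-degree at most $\fh(f)+\fh(g)$. By induction this gives $\fh\(\prod_i f_i\)\le \sum_i \fh(f_i)$.

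Next I would apply this to $\tilde g_{\bfa,M}$. The product ranges over monic $R(T)\in\Fq[T]$ with $1\le \deg_T R\le M$, and there are exactly $q^s$ monic polynomials of each degree $s\in\{1,\dots,M\}$. Lemma~\ref{lem:one} (applied with $P(T)=R(T)$, noting $\deg(\bfa)\le D$) yields $\fh(g_{\bfa,R(T)})\le (2+D)^{q^{r\deg_T R}}$, and so
\[
\fh(\tilde g_{\bfa,M})\;\le\;\sum_{s=1}^{M} q^s\,(2+D)^{q^{rs}}\;\le\; Mq^M (2+D)^{q^{rM}},
\]
where in the last step I use that the sum is dominated by its $s=M$ term.

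Finally I would check that $Mq^M (2+D)^{q^{rM}}\le c_2 (2+D)^{q^{(r+1)(M+1)}}$ for some constant $c_2=c_2(q,r,D)$. Since $q^{(r+1)(M+1)}-q^{rM}\ge q^{rM}\(q^{M+r+1}-1\)$, the right-hand side is larger than the left-hand side by a factor of $(2+D)^{q^{rM}(q^{M+r+1}-1)}$, which dwarfs the polynomial factor $Mq^M$ for all $M\ge 1$; absorbing the bounded cases $M=O(1)$ into $c_2$ finishes the estimate. The identical argument with $\bfb,\bfb_2$ in place of $\bfa,\bfa_2$ gives the same bound for $\tilde g_{\bfb,M}$. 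There is no serious obstacle here — the whole statement is a bookkeeping exercise on top of Lemma~\ref{lem:one} — but the one point that must not be overlooked is multiplying through by $q^s$ to account for the number of monic $R(T)$ of each degree.
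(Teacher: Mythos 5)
Your proof is correct and follows essentially the same route as the paper: apply Lemma~\ref{lem:one} to each factor, use subadditivity of $\fh$ under products, count the $q^s$ monic polynomials of each degree $s\le M$, and bound $\sum_{s=1}^{M}q^s(2+D)^{q^{rs}}$ by $c_2(2+D)^{q^{(r+1)(M+1)}}$. You merely make explicit the height subadditivity and the final comparison that the paper leaves implicit.
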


\begin{proof}%% [Proof of Lemma~\ref{lem:one easy}.]
Again using that we have $q^s$ monic polynomials of degree $s$ in $\Fq[T]$ along with Lemma~\ref{lem:one}, we derive that both $\fh(\tilde{g}_{\bfa,M}(z))$ and $\fh(\tilde{g}_{\bfb,M}(z))$ are bounded above by
$$\sum_{s=1}^M q^s\cdot (2+D)^{q^{rs}}\ll (2+D)^{q^{(r+1)(M+1)}},$$
where the implied constant depends only on $D$, $q$ and $r$. This concludes our proof of Lemma~\ref{lem:one easy}.
\end{proof}

\subsection{Concluding the proof} 
As previously noted, due to Theorem~\ref{thm:unlikely intersections} (because $\bfa$ and $\bfb$ are $\Fq$-inearly independent), we know that there exist at most $N_0$ solutions in $\Kbar$ for the system:
$$\tilde{g}_{\bfa,M}(z)=\tilde{g}_{\bfb,M}(z)=0.$$
Then, by Lemma~\ref{lem:reduction} (see also Lemmas~\ref{lem:one easy} and~\ref{lem:two easy}), we know that there exists a nonzero polynomial $\cT \in\Fq[t]$ of degree at most 
\begin{equation}
\label{eq:the bound}
\left(2q^{(M+1)(r+1)}+1\right)\cdot c_2(2+D)^{q^{(M+1)(r+1)}}<c_3(5+2D)^{q^{(M+1)(r+1)}},
\end{equation}
(for another constant $c_3$ depending only on $D$, $q$ and $r$) such that for each $\tau\in\Fqbar$ such that $\cT(\tau)\ne 0$, then the specialization $t\mapsto \tau$ yields polynomials $\tilde{g}_{\bfa,M}(\tau;x),\tilde{g}_{\bfb,M}(\tau;x)\in \Fqbar[x]$ which have at most $N_0$ common roots $\lambda\in\Fqbar$. So, as long as $\tau$ generates a finite field extension $\F_{q^\ell}/\F_q$ of degree larger than $\deg(\cT)$, then we are guaranteed that the system of equations
\begin{equation}
\label{eq:system reduction}
\tilde{g}_{\bfa,M}(\tau;\lambda)=\tilde{g}_{\bfb,M}(\tau;\lambda)=0
\end{equation}
has at most $N_0$ solutions $\lambda\in\Fqbar$. Hence, we may take $M:=c_0  \log\log(\ell)$ for some  absolute constant $c_0> 0 $, which depends only on $q$, $r$ and $D=\max\{\deg(\bfa),\deg(\bfb)\}$),so that $\ell$ is larger than the bound from \eqref{eq:the bound}. Therefore, in this case, for each $\lambda\in\Fqbar$ other than the $N_0$ common solutions to the system~\eqref{eq:system reduction}, we obtain that not both $\bfa(\tau)$ and $\bfb(\tau)$ are torsion points  under the action of $\Phi^{(\tau,\lambda)}$ of orders of degrees less than $M=c_0\log\log\ell$. This concludes our proof of Theorem~\ref{thm:main}.

%%%%%%%%%%%%%%%%%%%%%%%%%%%%%%%%%%%%%%%%%%%%%%%%%%%%%%%%%%%%%%%%%%%%%%%%%%%%%%%
%%%%%%%%%%%%%%%%%%%%%%%%%%%%%%%%%%%%%%%%%%%%%%%%%%%%%%%%%%%%%%%%%%%%%%%%%%%%%%%

\section{Comments}
\label{sec:comments}

%\begin{remark}
%\label{rem:three}
In~\cite[Theorem~1.5]{GH-Acta}, a slightly more general result has been proven than the stated Theorem~\ref{thm:unlikely intersections}. More precisely, the same conclusion as in Theorem~\ref{thm:unlikely intersections} holds if $\bfa$ and $\bfb$ are $\Fq$-linearly independent elements of $\Fqbar(t)$. The exact same proof that we used for our Theorem~\ref{thm:main} could be applied for $\bfa,\bfb\in \F_{q^s}(t)$ (for some given $s\in\bN$); the only difference is  that in Theorem~\ref{thm:unlikely intersections} we would need to replace $\Fq$ by $\F_{q^s}$ and $\F_{q^\ell}$ by $\F_{q^{s\ell}}$.   

Furthermore, in~\cite[Theorem~1.4]{GH-Acta}, a similar result as Theorem~\ref{thm:unlikely intersections} is  proven for the same family of Drinfeld modules but replacing $\Fq(t)$ with any function field $K$ in which $\Fq$ is algebraically closed.  Once again, a similar result as the one from Theorem~\ref{thm:main} can be derived when starting with two $\Fq$-linearly independent points $\bfa,\bfb\in K$. Indeed, for all but finitely many specializations $t\mapsto \tau$ (where $\tau\in\Fqbar$) we have well defined specializations $\bfa\mapsto \bfa(\tau)$ and $\bfb\mapsto \bfb(\tau)$ and the rest of the argument follows after replacing the technical statements from Subsection~\ref{subsec:Drinfeld} with similar results inferring heights (for more details regarding heights for Drinfeld modules, see~\cite{G-JNT}). Now, in order to extend the result of our Theorem~\ref{thm:main} to arbitrary $\Fq$-linearly independent points in $\overline{\Fq(t)}$ (or more generally, to arbitrary one-dimensional families of Drinfeld modules), the same strategy from this paper applies as along as one would find a strengthening 
of~\cite[Theorems~1.4 and~1.5]{GH-Acta}; however, this last part seems very difficult  at this moment since the conclusions in~\cite[Theorems~1.4 and~1.5]{GH-Acta} are obtained through some nontrivial arguments using valuation theory, which do not seem amenable to generalizations.

Finally, one could consider a higher-dimensional variant of our Theorem~\ref{thm:main} by studying a family of Drinfeld modules parametrized by more than one variable. So, for example, letting once again $K:=\Fq(t)$, a natural question would be to consider the $2$-dimensional family of Drinfeld modules $\Psi^{(z_1,z_2)}:\Fq[T]\lra {\rm End}_{K(z_1,z_2)}(\bG_a)$ given by
$$\Psi^{(z_1,z_2)}_T(x):=tx + z_1x^q+z_2 x^{q^2} + x^{q^3}$$
along with three $\Fq$-linearly independent points $\bfa,\bfb,\bfc\in K$. Then the \emph{expectation} is that there are only finitely many pairs of $(z_1,z_2)\in \Kbar^2$ such that $\bfa,\bfb,\bfc$ are simultaneously torsion points for the corresponding Drinfeld module $\Psi^{(z_1,z_2)}$. Even though proving this expectation is beyond the known methods in arithmetic dynamics, once such a result would be established, then our current proof strategy would still apply verbatim to this more general setting using a suitable multi-variables  generalization of  Lemma~\ref{lem:reduction} (note that Lemmas~\ref{lem:two} and~\ref{lem:one} easily extend to higher-dimensional families of Drinfeld modules).   

There are two possible approaches to such generalizing Lemma~\ref{lem:reduction}  to the multivariate setting of polynomials 
$f_1,\ldots, f_s\in \Fq[\bt][\bx]$  where    $\bx = (x_1, \ldots, x_n)$. 
The first approach, which is perhaps rather elaborate,  is to follow the argument of~\cite{TAMS}  
and establish an analogue of~\cite[Theorem~2.1]{TAMS}, in the function field case. In this case one can use the 
machinery of heights in  function fields,  and for example, rely on 
the {\it effective Hilbert Nullstellensatz\/} given by~\cite[Theorem~5]{DKS} (rather than on its number field 
counterpart~\cite[Theorem~5]{DKS} as in~\cite{TAMS}). The second way is a little simpler and shorter, but may lead to weaker results. 
For this we observe that if the system of equations 
$$
f_1(\bx) = \ldots = f_s(\bx) = 0
$$
has at most $N_0$ solutions in $\bx \in \overline{\Fq(\bt)}^n$, then the polynomial 
$$
F\(\bx_1, \ldots,  \bx_{N_0+1}\) := \prod_{j=1}^{N_0} \(x_{j,1} - x_{N_0,1}\)
$$
vanishes identically on the variety 
\begin{equation}
\label{eq:var}
f_1(\bx_j) = \ldots = f_s(\bx_j ) = 0, \qquad j =1, \ldots, N_0+1.
\end{equation}
Hence the $s(N_0+1)+1$ polynomials $f_i(\bx_j)$, $i =1, \ldots s$, $j =1, \ldots, N_0+1$ 
and $1- z F\(\bx_1, \ldots,  \bx_{N_0+1}\)$ in $n(N_0+1)+1$ variables do not have common zeros.
Now we are under the conditions of~\cite[Theorem~5]{DKS} which leads us to a
multivariate analogue of the relation~\eqref{eq:bigi} and thus allows us to estimate the number 
of specialisations $\bt \mapsto \tau \in \overline \Fq$ for which the corresponding polynomials 
$f_1(\tau;\bx) , \ldots,  f_s(\tau;\bx) $ have more that $N_0$ common zeros.

%\end{remark}

%%%%%%%%%%%%%%%%%%%%%%%%%%%%%%%%%%%%%%%%%%%%%%%%%%%%%%%%%%%%%%%%%%%%%%%%%%%%%%%
%%%%%%%%%%%%%%%%%%%%%%%%%%%%%%%%%%%%%%%%%%%%%%%%%%%%%%%%%%%%%%%%%%%%%%%%%%%%%%%

\section*{Acknowledgements}

The authors are grateful to Carlos D'Andrea and Martin Sombra for very useful discussions
and suggestions. 

D.~G. was partially supported by a Discovery Grant from NSERC, 
 and I.~S. was partially supported by an ARC Grant~DP200100355.

%%%%%%%%%%%%%%%%%%%%%%%%%%%%%%%%%%%%%%%%%%%%%%%%%%%%%%%%%%%%%%%%%%%%%%%%%%%%%%%%
%%%%%%%%%%%%%%%%%%%%%%%%%%%%%%%%%%%%%%%%%%%%%%%%%%%%%%%%%%%%%%%%%%%%%%%%%%%%%%%%

\end{document}